\newcommand{\pathtotrunk}{./}
\newenvironment{To}
{ \begin{tikzpicture}[baseline=-0.5ex,scale=0.45,thick]
  \draw[clip] (0,0) circle (1); 
  \begin{scope}[thin] }
{ \end{scope}
  \end{tikzpicture}}
\newenvironment{TO}
{ \begin{tikzpicture}[baseline=-0.5ex,scale=0.60,thick]
  \draw[clip] (0,0) circle (1); 
  \begin{scope}[thin] }
{ \end{scope}
  \end{tikzpicture}}
\newcommand{\poscross} {%
     \draw (45:-1) -- (45:1);
     \draw[->] (45:-1) -- (45:0.5);
     \draw (135:-1) -- (135:-0.2);
     \draw (135:0.2) -- (135:1);
     \draw[->] (135:0.2) -- (135:0.5);
}
\newcommand{\negcross} {%
     \draw (135:-1) -- (135:1);
     \draw[->] (135:-1) -- (135:0.5);
     \draw (45:-1) -- (45:-0.2);
     \draw (45:0.2) -- (45:1);
     \draw[->] (45:0.2) -- (45:0.5);
}
\newcommand{\Tcup}[1] {%
     \draw[rotate=#1] (45:1)
        .. controls (45:0.7) and (0.5,0.5) .. (0.5,0) 
        .. controls (0.5,-0.5) and (315:0.7) .. (315:1);
}
\newcommand{\Ttick}[1] {%
     \draw[rotate=#1] (0:1) -- (0:0.5);
     \fill[rotate=#1] (0:0.5) circle (0.1);
}
\newcommand{\TTICK}[1] {%
     \draw[rotate=#1] (0:1) -- (0,0);
     \fill[rotate=#1] (0,0) circle (0.1);
}
\newcommand{\Tstrand}[1] { \draw[rotate=#1] (0:1) -- (180:1); }
\newcommand{\Tloop} { \draw (0,0) circle (0.5); }
\newcommand{\Ti} {%
     \draw (90:0.5) -- (270:0.5);
     \fill (90:0.5) circle (0.1);
     \fill (270:0.5) circle (0.1);
}
\newcommand{\Tdoto} { \fill (0,0) circle (0.1); }
\newcommand{\Tdot}[1] { \fill (#1:0.5) circle (0.1); }
\newcommand{\uparr}[1] {%
     \draw[->,rotate=#1]
        (315:1) .. controls (315:0.7) and (0.5,-0.5) .. (0.5,0);
}
\newcommand{\downarr}[1] {%
     \draw[->,rotate=#1]
        (45:1) .. controls (45:0.7) and (0.5,0.5) .. (0.5,0);
}
\newcommand{\Rone}[2] {%
   \draw[xscale=#1] (225:1) -- (225:0.2);
   \draw[->,xscale=#1] (135:0.2) -- (135:0.5);
   \draw[xscale=#1] (135:0.2) -- (135:1);
   \draw[xscale=#1] (315:0.2)
      .. controls (315:0.7) and (0.7,-0.2) .. (0.7,0)
      .. controls (0.7,0.2) and (45:0.7) .. (45:0.2);
   \draw[yscale=#2] (135:0.2) -- (135:-0.2);
}
\newcommand{\Rtwo} {%
    \draw[->] (225:1)
        .. controls (-0.2,-0.6) .. (0,-0.5)
        .. controls (0.2,-0.4) and (0.5,-0.2) .. (0.5,0);
    \draw (225:1)
        .. controls (-0.2,-0.6) .. (0,-0.5)
        .. controls (0.2,-0.4) and (0.5,-0.2) .. (0.5,0)
        .. controls (0.5,0.2) and (0.2,0.4) .. (0,0.5)
        .. controls (-0.2,0.6) .. (135:1);
    \draw (315:1)
        .. controls (0.2,-0.6) .. (0.15,-0.575);
    \draw[->] (-0.15,0.425)
        .. controls (-0.2,0.4) and (-0.5,0.2) .. (-0.5,0);
    \draw (-0.15,-0.425)
        .. controls (-0.2,-0.4) and (-0.5,-0.2) .. (-0.5,0)
        .. controls (-0.5,0.2) and (-0.2,0.4) .. (-0.15,0.425);
    \draw (0.15,0.575)
        .. controls (0.2,0.6) .. (45:1);
}
\newcommand{\Rthree}[1] {%
%overstrand:
    \draw[xscale=#1] (270:1) arc (225:135:1.414);
%arrows:
    \draw[->,xscale=#1] (270:1) arc (225:180:1.414);
    \draw[->,xscale=#1,rotate around={28:(120:1)}]
         (210:1) arc (255:272:1.414);
    \draw[xscale=#1,rotate around={28:(120:1)}]
         (210:1) arc (255:289:1.414);
    \draw[->,xscale=#1,rotate=120,rotate around={28:(120:1)}]
         (210:1) arc (255:272:1.414);
    \draw[xscale=#1,rotate=120,rotate around={28:(120:1)}]
         (210:1) arc (255:289:1.414);
%short segments:
    \draw[xscale=#1] (210:1) arc (255:273:1.414);
    \draw[xscale=#1,rotate=120] (210:1) arc (255:273:1.414);
    \draw[xscale=#1,yscale=-1] (210:1) arc (255:273:1.414);
    \draw[xscale=#1,yscale=-1,rotate=120] (210:1) arc (255:273:1.414);
% overcrossing
    \draw[rotate=90*#1] (300:1) arc (255:210:1.414);
}
\newcommand{\innerproduct} {%
  \begin{tikzpicture}[baseline=-1.0ex,scale=0.30]
  {
    \draw[thick] circle (4);
    \draw[thin] (-2,0.8) -- (2,0.8);
    \draw[thin] (-2,-0.8) -- (2,-0.8);
    \draw[thick, fill=white] (-2,0) node {$X$} circle(1);
    \draw (-3,0) node [left = -4] {\tiny$*$};
    \draw[thick, fill=white] (2,0) node [xshift=1pt] {$Y^*$} circle(1);
    \draw (3,0) node [right = -4] {\tiny$*$};
    \draw (0,0) node [above = -8.5] {$\vdots$};
  }
  \end{tikzpicture}
}
\newcommand{%
%\beginpgfgraphicnamed{\pathtotrunk diagrams/tikz/#1-external}%
\input{\pathtotrunk diagrams/.tex}%
%\endpgfgraphicnamed
}[1]{%
%\beginpgfgraphicnamed{\pathtotrunk diagrams/tikz/#1-external}%
\input{\pathtotrunk diagrams/#1.tex}%
%\endpgfgraphicnamed
}
\newcommand{\arxiv}[1]{\href{http://arxiv.org/abs/#1}{\tt arXiv:\nolinkurl{#1}}}
\newcommand{\googlebooks}[1]{(preview at \href{http://books.google.com/books?id=#1}{google books})}
\def\RCS$#1: #2 ${\expandafter\def\csname RCS#1\endcsname{#2}}
\theoremstyle{plain}
\newtheorem{prop}{Proposition}[section]
\newtheorem{thm}[prop]{Theorem}
\newtheorem{lem}[prop]{Lemma}
\newtheorem*{cor*}{Corollary}
\newtheorem{defn}[prop]{Definition}         % numbered definition
\newtheorem*{defn*}{Definition}             % unnumbered definition
\newtheorem*{notation*}{Notation}
\numberwithin{equation}{section}
\def\clap#1{\hbox to 0pt{\hss#1\hss}}
\renewcommand{\imath}{\mathfrak{i}}
\renewcommand{\jmath}{\mathfrak{j}}
\title{A diagrammatic Alexander invariant of tangles}
\author{Stephen~Bigelow}
\address{
}%
\keywords{
  Alexander polynomial, tangle, skein theory, planar algebra.
}
\begin{document}

\begin{abstract}
We give a new construction of
the one-variable Alexander polynomial of an oriented knot or link,
and show that it generalizes
to a vector valued invariant of oriented tangles.
\end{abstract}

\maketitle

\section{Introduction}
The Alexander polynomial
is the unique invariant of oriented knots and tangles
that is one for the unknot
and satisfies the {\em Alexander-Conway skein relation}.
$$\begin{TO} \poscross \end{TO} - \begin{TO} \negcross \end{TO}
 = (q-q^{-1})\begin{TO}\Tcup{0}\Tcup{180} \downarr{180} \uparr{0} \end{TO}.$$
Many other equivalent definitions are known.
The aim of this paper is to give yet another definition of the
Alexander polynomial, which we will prove is equivalent to the above
skein theoretic definition.

An advantage of our definition
is that it generalizes immediately
to give an invariant of oriented tangles.
Other generalizations of the Alexander polynomial to tangles
have been given in \cite{cimtur} and \cite{arch}.
Their definitions are for the multivariable Alexander polynomial,
whereas this paper only concerns the single variable version.

Let $T$ be an oriented tangle diagram in a disk,
having two endpoints on the boundary of the disk.
We allow $T$ to contain more than one component:
one strand with both endpoints on the boundary of the disk,
and possibly other strands that form closed loops.
Let $\hat{T}$ denote the closure of $T$,
that is,
the oriented knot or link obtained by connecting the two endpoints of $T$.
Our construction of the Alexander polynomial of $\hat{T}$
is best described as an invariant of $T$.

In Section \ref{sec:defn}, we will define the invariant $\Delta(T)$.
The definition is reminiscent of the Kauffman bracket \cite{kauffman},
in that it is a state sum over a certain kind of resolutions of the crossings.
In Sections \ref{sec:pa} and \ref{sec:pa2},
we use planar algebras to study
the relevant formal linear combinations of diagrams.
In Section \ref{sec:proof},
we use our findings to prove that
$\Delta(T)$ is the Alexander polynomial of $\hat{T}$,
after multiplication by an appropriate monomial $\pm q^k$.
If $T$ is a tangle with more than two endpoints
then we still obtain an invariant $\Delta(T)$,
which is a linear combination of a finite number of simple diagrams.

The idea for this paper began when I was visiting Vincent Florens
at the Universit\'e de Pau et des Pays de l'Adour,
and I would like to thank him for his kind hospitality,
help, and motivation for this work.

I would also like to thank Dror Bar-Natan and his students for their interest,
many helpful observations, and programming skill.
Bar-Natan used Mathematica to check the tedious hand calculations used in this paper,
and show that $\Delta(T)$ is invariant under
Naik and Stanford's doubled-delta move \cite{naikstanford}.
He also observed a parallel between my invariant
and the invariant defined by Archibald in \cite{arch}.
It seems almost certain that these invariants are in fact equivalent.

%-----------------------------
\section{Definition of the invariant}
\label{sec:defn}

Let $T$ be an oriented tangle diagram in a disk,
having two endpoints on the boundary of the disk.
In this section,
we define $\Delta(T)$.
Our definition is based on the following.
$$
\begin{To} \poscross \end{To}
= q \, \begin{To} \Tcup{0} \Tcup{180} \end{To}
+ q \left(
\begin{To} \Tstrand{135} \Ttick{45} \Ttick{225} \end{To}
- \begin{To} \Tcup{0} \Ttick{135} \Ttick{225} \end{To}
- \begin{To} \Ttick{45} \Ttick{135} \Ttick{225} \Ttick{315} \end{To}
\right)
+ q^{-1} \left(
\begin{To} \Tstrand{45} \Ttick{135} \Ttick{315} \end{To}
- \begin{To} \Tcup{180} \Ttick{45} \Ttick{315} \end{To}
- \begin{To} \Ttick{45} \Ttick{135} \Ttick{225} \Ttick{315} \end{To}
\right),
$$
$$
\begin{To} \negcross \end{To}
= q^{-1} \begin{To} \Tcup{0} \Tcup{180} \end{To}
+ q \left(
\begin{To} \Tstrand{135} \Ttick{45} \Ttick{225} \end{To}
- \begin{To} \Tcup{0} \Ttick{135} \Ttick{225} \end{To}
- \begin{To} \Ttick{45} \Ttick{135} \Ttick{225} \Ttick{315} \end{To}
\right)
+ q^{-1} \left(
\begin{To} \Tstrand{45} \Ttick{135} \Ttick{315} \end{To}
- \begin{To} \Tcup{180} \Ttick{45} \Ttick{315} \end{To}
- \begin{To} \Ttick{45} \Ttick{135} \Ttick{225} \Ttick{315} \end{To}
\right).
$$
Here,
a right- or left-handed crossing
is written as a formal linear combination of seven diagrams.
The coefficients are $\pm q^{\pm 1}$,
where $q$ can be taken to be a formal variable.
We allow strands to have endpoints in the interior of the diagram.

Apply the above rule in a multilinear fashion to all of the crossings in $T$.
If $T$ has $n$ crossings
then we obtain a sum of $7^n$ terms
$$T = \sum_{i=1}^{7^n} \lambda_i D_i,$$
where
each coefficient $\lambda_i$ is of the form $\pm q^{k_i}$,
and each $D_i$ is a diagram with no crossings.
We can forget the orientations on strands in $D_i$.

We will define $\Delta(T)$ to be a sum of some of the coefficients $\lambda_i$,
where the diagrams $D_i$ determine which coefficients to include in the sum.
Each $D_i$ is a disjoint union of embedded loops and edges,
where an edge may have zero, one, or both endpoints on the boundary of the disk.
Eliminate any $\lambda_i$ for which $D_i$ contains a loop,
or contains an edge with exactly one endpoint on the boundary of the disk,
and let $\Delta(T)$ be the sum of the remaining coefficients.
Thus $\Delta(T)$ is the sum of $\lambda_i$ taken over all $i$
such that $D_i$ has no closed loops,
one strand with both endpoints on the boundary of the disk,
and possibly some strands with both endpoints in the interior of the disk.

%-----------------------------
\section{A planar algebra}
\label{sec:pa}

Our definition of $\Delta(T)$ actually describes
a morphism from the planar algebra of oriented tangles
to a planar algebra $\mathcal{P}$ of unoriented $1$-valent graphs.
We can define $\mathcal{P}$ by generators and relations as follows.

\begin{defn}
Let $\mathcal{P}$ be the planar algebra given by the one generator:
$$\begin{To} \TTICK{270} \end{To}$$
and the two relations:
$$
\begin{To} \Tloop \end{To} = 0,
\quad \mbox{and}\quad 
\begin{To} \Ti \end{To} = \begin{To} \end{To}.
$$
\end{defn}

The aim of this section is to flesh out this definition
and give some basic properties of $\mathcal{P}$.

\begin{defn}
A {\em basis diagram} is
a collection of disjoint embedded edges in the disk,
each having either one or both endpoints on the boundary of the disk.
Every diagram also includes a basepoint on the boundary of the disk,
which may not coincide with the endpoint of any strand.
Two diagrams are considered the same if they are isotopic.
\end{defn}

Let $\mathcal{P}_n$ be the complex vector space
of formal linear combinations of basis diagrams
that have a total of $n$ endpoints on the boundary of the disk
(and possibly some endpoints in the interior of the disk).

A more general diagram in $\mathcal{P}_n$
may include closed loops,
or edges with both endpoints in the interior of the disk.
The defining relations state that
any diagram with a closed loop is zero,
and strands with both endpoints in the interior can be deleted.
I like to think of these loops and interior edges
as ``bubbles'' and ``confetti''.

The vector spaces $\mathcal{P}_n$ form a planar algebra $\mathcal{P}$.
We will not give a formal definition of a planar algebra here.
It should suffice to think of a planar algebra as
a collection of vector spaces of formal linear combinations of diagrams,
which can be connected together in arbitrary planar ways.
For a more detailed definition, see Jones \cite{jonesplanar}
(but note that our planar algebra $\mathcal{P}$ is not shaded,
and diagrams in $\mathcal{P}_n$ have $n$ endpoints as opposed to $2n$).

The vector space $\mathcal{P}_0$ is spanned by the empty diagram.
Let the {\em partition function}
$$Z \co \mathcal{P}_0 \to \mathbf{C}$$
be the isomorphism that takes the empty diagram to one.
We remark that $\mathcal{P}$ is {\em spherical}
in the sense that the partition function gives
a well-defined invariant of diagrams drawn on a sphere.

We define an {\em adjoint operation} $*$ on $\mathcal{P}$ as follows.
If $D$ is a diagram then $D^*$ is the mirror image of $D$.
Extend this to a conjugate-linear operation on $\mathcal{P}_n$ for all $n$.
We define an inner product on each space $\mathcal{P}_n$
to be the following sesquilinear operation.
$$ \langle X,Y \rangle = Z \left( \innerproduct \right).$$
The two small stars in the above diagram
indicate the basepoints on the boundary of $X$ and $Y^*$.
For the rest of this paper,
the basepoint can be taken to be at the far left of every diagram,
and will therefore be omitted.

We now introduce notation for an important element of $\mathcal{P}_2$.

\begin{defn}
Let a {\em dotted strand}
denote the following element of $\mathcal{P}_2$.
$$
\begin{To} \Tstrand{90} \Tdoto \end{To} =
\begin{To} \Tstrand{90} \end{To} - \begin{To} \Ttick{90} \Ttick{270} \end{To}.
$$
\end{defn}

Thus a diagram with $n$ dots on strands
is shorthand for a linear combination of $2^n$ diagrams.

\begin{lem}
\label{lem:dot}
The following relations hold in $\mathcal{P}$.
\begin{itemize}
\item $\begin{To} \TTICK{270} \Tdot{270} \end{To} = 0$,
\item $\begin{To} \Tstrand{90} 
       \fill (90:0.3) circle (0.1cm); % \Tdot{90} 
       \fill (270:0.3) circle (0.1cm); % \Tdot{270} 
       \end{To}
      = \begin{To} \Tstrand{90} \Tdoto \end{To}$,
\item $\begin{To} \Tloop \Tdot{0} \end{To} = - \, \begin{To} \end{To}$.
\end{itemize}
\end{lem}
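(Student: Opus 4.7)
The plan is to treat the defining formula for the dotted strand as a \emph{local} substitution rule: inside any diagram, a small neighbourhood of a dot on an edge may be replaced by the difference (straight edge) $-$ (edge cut into two stubs with free interior endpoints). Each of the three identities will then follow by applying this rule once and simplifying via the two defining relations of $\mathcal{P}$, namely that a closed loop vanishes and that an edge with both endpoints in the interior of the disk (``confetti'') equals the empty diagram.

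For the first relation I would apply the rule to the dot on the tick. The ``uncut'' term reproduces the original tick, while the ``cut'' term separates the tick into a shorter tick together with a small interior edge between their newly created free endpoints. The interior edge is confetti and may be deleted, leaving a shorter tick which is isotopic to the original. The two terms therefore cancel, so the dotted tick is zero. The second relation is now immediate: apply the rule at one of the two dots on the strand; the ``uncut'' term is the desired single-dotted strand, while the ``cut'' term contains, as a connected component, a tick carrying the remaining dot, which vanishes by the first relation we have just proved.

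For the third relation I would apply the rule to the dot sitting on the loop. The ``uncut'' term is a plain loop, which is zero by the loop relation of $\mathcal{P}$. The ``cut'' term is a single arc with both endpoints free in the interior, i.e.\ confetti, which equals the empty diagram. Hence the dotted loop equals $0 - (\text{empty diagram}) = -(\text{empty diagram})$. The only conceptual hurdle I anticipate is justifying that the defining equation for the dotted strand should be read as a local substitution valid inside any larger diagram, rather than merely as an identity among three specific pictures; once that is in hand, each part of the lemma comes down to a single application of the rule together with one or two invocations of the defining relations of $\mathcal{P}$.
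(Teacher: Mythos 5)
Your proposal is correct and is exactly the argument the paper intends; the paper's proof simply says the relations are "immediate from the definition and the defining relations," and your expansion of each dot into (uncut) $-$ (cut) followed by the loop and confetti relations is the unique way to carry that out. The "conceptual hurdle" you flag is already handled by the paper's remark that a diagram with $n$ dots is shorthand for a linear combination of $2^n$ diagrams, i.e.\ the dotted strand is an element of $\mathcal{P}_2$ inserted multilinearly into any surrounding planar tangle.
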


\begin{proof} These are immediate from the definition and the defining
relations of $\mathcal{P}$.
\end{proof}

\begin{defn}
A {\em dotted basis diagram}
is a diagram in which
every strand is either a dotted strand
with both endpoints on the boundary of the disk,
or a non-dotted strand
with exactly one endpoint on the boundary of the disk.
\end{defn}

\begin{lem}
The dotted basis diagrams in $\mathcal{P}_n$
form a basis for $\mathcal{P}_n$.
\end{lem}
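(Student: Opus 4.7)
The plan is to set up a natural bijection between dotted basis diagrams and ordinary basis diagrams of $\mathcal{P}_n$, and then show that with respect to a suitable filtration the corresponding change-of-basis matrix is unitriangular, hence invertible. Since $\mathcal{P}_n$ is \emph{defined} as the space of formal linear combinations of basis diagrams, basis diagrams are already a basis, so this reduces the lemma to invertibility of a square matrix.

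First I would identify the combinatorial data underlying both kinds of diagrams. A basis diagram with $n$ boundary endpoints is determined by a non-crossing matching of some subset of the boundary points (its boundary-to-boundary strands), together with a tick at each unmatched boundary point. A dotted basis diagram is determined by exactly the same data, with the boundary-to-boundary strands now decorated by dots. Thus the map $D_{\mathrm{dot}} \mapsto D$ given by erasing all dots is a bijection between dotted basis diagrams and basis diagrams in $\mathcal{P}_n$.

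Next I would expand each dotted basis diagram via the definition
$$\begin{To} \Tstrand{90} \Tdoto \end{To} = \begin{To} \Tstrand{90} \end{To} - \begin{To} \Ttick{90} \Ttick{270} \end{To}$$
applied multilinearly over all of its $k$ dotted strands. This gives
$$D_{\mathrm{dot}} = \sum_{S \subseteq \{1,\dots,k\}} (-1)^{|S|} D_S,$$
where $D_S$ is the basis diagram obtained by cutting each strand indexed by $S$ into a pair of ticks. The term $S = \emptyset$ equals $D$, while every other $D_S$ has strictly fewer boundary-to-boundary strands than $D$.

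Finally, I would filter the basis diagrams by number of boundary-to-boundary strands (more strands higher in the order). The expansion above shows that the transition matrix from dotted basis diagrams to basis diagrams has a $1$ on the diagonal (against the paired basis diagram $D$) and all other contributions strictly lower in the filtration; that is, the matrix is unitriangular and therefore invertible. This proves that dotted basis diagrams form a basis of $\mathcal{P}_n$. The only real thing to check is that the bijection is well-defined and that the ``lower-order'' terms really are strictly lower in the filtration, both of which are immediate from the formulas, so this step presents no substantive obstacle.
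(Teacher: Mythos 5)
Your proof is correct and rests on the same two ingredients as the paper's own proof: the identity expressing an undotted strand as a dotted strand plus two ticks (equivalently, a dotted strand as the undotted strand minus two ticks), and the dot-erasing bijection between dotted basis diagrams and basis diagrams. The only difference is one of packaging --- the paper proves spanning by the rewriting procedure and then gets linear independence from a dimension count via that same bijection, whereas you obtain both at once by noting that the transition matrix is unitriangular with respect to the filtration by the number of boundary-to-boundary strands; this makes the independence step more explicit but is not a substantively different argument.
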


\begin{proof}
Rearranging the definition of a dotted strand,
we have
$$
\begin{To} \Tstrand{90} \end{To} =
\begin{To} \Tstrand{90} \Tdoto \end{To} +
\begin{To} \Ttick{90} \Ttick{270} \end{To}.
$$
We can use this to ensure that every strand is either dotted
or has at least one endpoint in the interior of the disk.
Now eliminate any closed loops
and strands that have both endpoints in the interior of the disk.
This shows that the dotted basis diagrams span $\mathcal{P}_n$.
The easiest way to see that they are linearly independent
is by a dimension count:
the number of dotted basis diagrams for $\mathcal{P}_n$ is the same as
the number of basis diagrams for $\mathcal{P}_n$.
\end{proof}

%-----------------------------
\section{An improved planar algebra}
\label{sec:pa2}

It turns out that $\mathcal{P}$ is
not exactly the best planar algebra to work with.
In this section,
we impose an additional relation
to obtain a new planar algebra $\mathcal{P}'$.
This is motivated by the following.

\begin{lem}
\label{lem:negligible}
If
$X = \begin{To} \Tcup{90} \Tdot{90} \Tcup{270} \Tdot{270} \end{To}
    + \begin{To} \Tcup{0} \Tdot{0} \Tcup{180} \Tdot{180} \end{To}$
then $\langle X,Y \rangle = 0$
for all $Y \in \mathcal{P}_4$.
\end{lem}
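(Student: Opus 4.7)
The plan is to reduce the lemma to a finite verification via sesquilinearity of the inner product. Since $\langle\,\cdot\,,\,\cdot\,\rangle$ is conjugate-linear in its second argument, it suffices to prove $\langle X, Y\rangle = 0$ for $Y$ ranging over a basis of $\mathcal{P}_4$. A convenient basis is the collection of plain (non-dotted) basis diagrams, of which there are nine in $\mathcal{P}_4$: two non-crossing perfect matchings of the four boundary points, six diagrams with one cup (a single arc joining two boundary points) plus two ticks (arcs with a single boundary endpoint) at the remaining two points, and the diagram with four ticks.

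I would expand each of the two summands of $X$ into this non-dotted basis by applying the defining identity for a dotted strand to each of its two dotted cups; each summand of $X$ then becomes a signed sum of four plain basis diagrams. For a pair $e_i, e_j$ of plain basis diagrams, the inner product $\langle e_i, e_j\rangle = Z(e_i \cdot e_j^*)$ is the partition function of a closed diagram whose connected components are either closed loops or edges with both endpoints in the interior; by the defining relations of $\mathcal{P}$ the latter can be deleted and the former force $Z$ to vanish, so $\langle e_i, e_j\rangle \in \{0,1\}$. Its value is determined simply by tracing around the glued diagram to see whether any cups in $e_i$ match up with cups in $e_j^*$ to close into loops.

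To cut down on bookkeeping I would exploit the $90^\circ$ rotational symmetry of the disk, which cyclically permutes the four boundary points and interchanges the two summands of $X$, so that $X$ is rotation-invariant. Under this symmetry the nine basis diagrams fall into four orbits (the two $4$-point matchings; the four \emph{adjacent} one-cup-plus-two-ticks diagrams; the two \emph{diagonal} one-cup-plus-two-ticks diagrams; and the all-ticks diagram), so only one representative from each orbit need be checked. In each of these four cases the computation collapses to a sum of at most eight $\pm 1$ contributions, which are directly verified to cancel. The main obstacle is simply the bookkeeping of these contributions; no conceptual difficulty arises.
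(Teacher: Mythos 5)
Your proposal is correct --- I checked the four representative cases and the signed sums of $0$'s and $1$'s do cancel each time --- but it takes a genuinely different, more computational route than the paper. The paper never expands the dotted cups into plain diagrams and never enumerates a basis of $\mathcal{P}_4$. Instead it argues structurally, using only Lemma \ref{lem:dot}: for a basis diagram $Y$, either (i) some boundary point of $X$ is attached to a strand of $Y$ ending at an interior univalent vertex, in which case the dotted cup at that point in \emph{each} of the two terms of $X$ acquires a free interior end and both terms vanish; or (ii) the four boundary points are paired up by strands of $Y$, and since the only planar pairings of four points are the two adjacent ones, some adjacent pair is joined --- then the term of $X$ whose dotted cup spans that pair closes into a dotted loop (a factor of $-1$) while the other term concatenates its two dotted cups into a doubly-dotted, hence singly-dotted, strand (a factor of $+1$), so the two terms cancel. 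Your approach buys a purely mechanical verification requiring no idea beyond bookkeeping, and the $90^\circ$-symmetry reduction to four orbit representatives is a nice touch; the paper's approach buys brevity and an explanation of \emph{why} precisely this combination of the two adjacent matchings is negligible. The one thing to press you on: you assert that the contributions "are directly verified to cancel" without exhibiting the four computations, and under your approach those computations are the entire content of the lemma, so they must actually be written out (each is a sum of the seven plain diagrams occurring in $X$, with coefficients $+1,+1,-1,-1,-1,-1,+2$, paired against the chosen $Y$). Also be slightly careful with your criterion for $\langle e_i,e_j\rangle$: a closed loop can arise not only when a cup of $e_i$ coincides with a cup of $e_j^*$ but also when cups from the two diagrams concatenate into a longer cycle, as happens when pairing the two distinct perfect matchings against each other.
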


\begin{proof}
If any endpoint of $X$ leads to a univalent vertex
then both terms in $X$ become zero.
If any neighboring endpoints of $X$ are joined by a strand
then the two terms in $X$ cancel out.
One or both of these must happen in the computation of
$\langle X,Y \rangle$ for any diagram $Y$.
\end{proof}

The above proposition can be phrased as saying that
$X$ is {\em negligible} in $\mathcal{P}$.
It is common practice
to quotient out negligible elements of a planar algebra.

\begin{defn}
Let $\mathcal{P}'$ be the planar algebra given by the one generator:
$$\begin{To} \TTICK{270} \end{To}$$
and the three relations:
$$
\begin{To} \Tloop \end{To} = 0,
\quad 
\begin{To} \Ti \end{To} = \begin{To} \end{To},
\quad
\mbox{and}
\quad
\begin{To} \Tcup{90} \Tdot{90} \Tcup{270} \Tdot{270} \end{To}
  + \begin{To} \Tcup{0} \Tdot{0} \Tcup{180} \Tdot{180} \end{To} = 0.
$$
Call the third relation the {\em saddle relation}.
\end{defn}

\begin{lem}
$\mathcal{P}'_0$ is one-dimensional.
\end{lem}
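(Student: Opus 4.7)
The plan is to bound $\dim \mathcal{P}'_0$ both above and below. The upper bound is immediate: since $\mathcal{P}'$ is defined by the generators and relations of $\mathcal{P}$ together with the saddle relation, there is a surjection $\mathcal{P}_0 \twoheadrightarrow \mathcal{P}'_0$. We already know $\mathcal{P}_0$ is one-dimensional, spanned by the empty diagram, so $\dim \mathcal{P}'_0 \leq 1$. All of the work is in proving the empty diagram does not become zero in $\mathcal{P}'_0$.

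For the lower bound, I would show that the partition function $Z \colon \mathcal{P}_0 \to \mathbf{C}$ factors through the quotient map $\mathcal{P}_0 \twoheadrightarrow \mathcal{P}'_0$. The kernel of this quotient map is the subspace spanned by elements of $\mathcal{P}_0$ obtained by inserting the saddle element
$$X = \begin{To} \Tcup{90} \Tdot{90} \Tcup{270} \Tdot{270} \end{To} + \begin{To} \Tcup{0} \Tdot{0} \Tcup{180} \Tdot{180} \end{To}$$
into some four-endpoint disk inside a closed diagram. Using sphericality of $\mathcal{P}$, such an insertion can be interpreted as pairing $X$ with a dual four-endpoint diagram $Y$, giving $\langle X, Y^{*}\rangle$ (up to relabelling of basepoints). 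By Lemma \ref{lem:negligible}, this pairing is zero for every $Y$, so $Z$ vanishes on the kernel and descends to a linear map $\bar Z \colon \mathcal{P}'_0 \to \mathbf{C}$.

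Since $\bar Z$ sends the empty diagram to $1$, it is nonzero, so the empty diagram is a nonzero element of $\mathcal{P}'_0$. Combined with the upper bound, this gives $\dim \mathcal{P}'_0 = 1$.

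The main obstacle is making the insertion-as-pairing interpretation watertight. I would need to be careful that any relation of the form "insert $X$ in a four-endpoint subdisk of a sphere diagram" really does compute as an inner product $\langle X, Y \rangle$ in the sense of Section \ref{sec:pa}, and that the saddle relation is invariant under the $90^\circ$ rotation (which follows immediately from the fact that rotating interchanges the two summands of $X$, leaving the sum fixed). Both of these are straightforward consequences of the planar-algebra formalism and the spherical property, and then Lemma \ref{lem:negligible} does all the remaining work.
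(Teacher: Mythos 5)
Your proof is correct and is essentially the paper's own argument: the paper likewise observes that $\mathcal{P}'_0$ is the quotient of $\mathcal{P}_0$ by the span of closed diagrams containing the saddle element $X$, and that these are already zero by Lemma \ref{lem:negligible} since, by sphericality, such an insertion computes as a pairing $\langle X, Y\rangle$. Your version just spells out the factoring of the partition function and the rotation-invariance of $X$ a bit more explicitly than the paper does.
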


\begin{proof}
It is a general fact
about spherical planar algebras $\mathcal{P}$
that taking the quotient by a negligible element $X$
has no effect on the space of closed diagrams $\mathcal{P}_0$.
This is because $\mathcal{P}'_0$ is the quotient of
$\mathcal{P}_0$ by the span of all elements
obtained by placing $X$ inside a larger diagram,
but such elements are already zero in $\mathcal{P}_0$
by Lemma \ref{lem:negligible}.
\end{proof}

We can define a partition function and inner product on $\mathcal{P}'$
as we did for $\mathcal{P}$.
The dotted basis diagrams still span $\mathcal{P}'_n$,
but they are no longer linearly independent.
We define an equivalence relation as follows.

\begin{defn}
Two dotted basis diagrams $D$ and $D'$ are {\em equivalent}
if they have the same number of dotted strands,
and the same set of endpoints of dotted strands.
\end{defn}

\begin{lem}
Suppose $D$ and $D'$ are dotted basis diagrams in $\mathcal{P}'_n$.
Then $\langle D,D' \rangle$ is $\pm 1$ if $D$ and $D'$ are equivalent,
and $0$ if they are not.
\end{lem}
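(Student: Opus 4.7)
The plan is to compute $\langle D, D' \rangle = Z(D \cdot D'^*)$ directly by gluing $D$ (inside a disk) to $D'^*$ (outside) and evaluating the resulting closed diagram on the sphere. Since $\mathcal{P}'_0 = \mathcal{P}_0$ (the saddle relation has no effect on closed diagrams), we may work inside $\mathcal{P}$ using its original relations together with Lemma \ref{lem:dot}.

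Let $S$ and $S'$ denote the sets of boundary endpoints of dotted strands in $D$ and $D'$ respectively, so that $D$ and $D'$ are equivalent precisely when $S = S'$. Each strand of $D$ or of $D'^*$ is either a dotted strand (two boundary endpoints, one dot) or a tick (one boundary endpoint, one univalent interior endpoint). After gluing, each connected component of the closed diagram on the sphere is either (i) a closed loop obtained by alternating dotted strands of $D$ and $D'^*$ through positions in $S \cap S'$, or (ii) an open arc between two univalent interior vertices, obtained by starting from a tick on one side, possibly passing through some interleaved dotted strands, and terminating at a tick on the other side.

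The main technical step is to show that any open arc carrying at least one dot evaluates to zero. Given such an arc, the second bullet of Lemma \ref{lem:dot} collapses any positive number of dots to a single dot, and expanding that dot via the definition of a dotted strand writes the arc as (arc without dot) minus (arc cut at the dot). The cut arc is the original arc with an extra ``confetti'' segment (an interior strand with two univalent endpoints), which by the confetti relation is deletable, so both summands contribute equally to the ambient diagram and their difference is zero. This is a direct generalization of the dotted-tick identity, the first bullet of Lemma \ref{lem:dot}.

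Given this claim, the two cases are immediate. If $S \neq S'$, choose any $i$ in the symmetric difference; the connected component through $i$ is an open arc containing at least one dotted strand (from whichever side has one at $i$), hence at least one dot, and so contributes zero, giving $\langle D, D' \rangle = 0$. If $S = S'$, then every $i \notin S$ contributes a dot-free confetti arc evaluating to $1$, while every $i \in S$ lies on a closed loop built entirely of dotted strands which carries at least one dot and hence, by the second and third bullets of Lemma \ref{lem:dot}, evaluates to $-1$. The total value is $(-1)^c \in \{\pm 1\}$, where $c$ counts the cycles in the union of the two perfect matchings of $S$ given by $D$ and $D'$. The main obstacle is the careful verification of the key claim about open arcs with dots; once that is in hand, the rest of the argument is routine bookkeeping.
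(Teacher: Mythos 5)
Your proof is correct and follows essentially the same route as the paper's: glue $D$ to $(D')^*$, classify the components of the resulting closed diagram into closed dotted loops and arcs with interior (univalent) endpoints, and evaluate using Lemma \ref{lem:dot} together with the confetti relation. The paper's version is terser---in the non-equivalent case it simply observes that a dotted strand glued to a tick yields a dotted tick, which vanishes by the first bullet of Lemma \ref{lem:dot}, rather than re-expanding the dot as you do---but the decomposition and bookkeeping are the same, and your explicit value $(-1)^c$ refines the paper's $\pm 1$.
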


\begin{proof}
Consider the closed diagram obtained
by connecting the corresponding endpoints of $D$ and $(D')^*$.
If $D$ and $D'$ are equivalent
then their dotted strands connect
to form some number of closed dotted loops,
and the undotted strands are connected
to form strands with both endpoints in the interior.
If $D$ and $D'$ are not equivalent
then a dotted strand from one of the diagrams
is connected to a strand with an interior endpoint from the other.
The result now follows from Lemma \ref{lem:dot}.
\end{proof}

\begin{lem}
Let $\mathcal{B}$ be a set consisting of one dotted basis diagram
from each equivalence class in $\mathcal{P}'_n$.
Then $\mathcal{B}$ is a basis for $\mathcal{P}'_n$.
\end{lem}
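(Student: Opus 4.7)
The plan is to establish linear independence and spanning separately, with linear independence coming essentially for free from the preceding inner-product computation and spanning requiring an additional argument based on the saddle relation.

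Linear independence follows directly from the previous lemma: distinct elements of $\mathcal{B}$ lie in distinct equivalence classes and are therefore orthogonal, while each $D_j \in \mathcal{B}$ satisfies $\langle D_j, D_j \rangle = \pm 1$. Given a relation $\sum_i c_i D_i = 0$ with the $D_i \in \mathcal{B}$ distinct, pairing against each $D_j$ yields $\pm c_j = 0$, so every coefficient vanishes.

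For spanning, I would first observe that the dotted basis diagrams span $\mathcal{P}'_n$, inherited from the analogous statement in $\mathcal{P}_n$ since $\mathcal{P}'_n$ is a quotient of $\mathcal{P}_n$. It then suffices to prove that every dotted basis diagram $D$ equals $\pm D_0$ in $\mathcal{P}'_n$, where $D_0 \in \mathcal{B}$ is the chosen representative of its equivalence class. Equivalence forces $D$ and $D_0$ to share the same boundary endpoints of dotted strands, hence also of undotted strands. Because an undotted strand is determined up to isotopy by its single boundary endpoint alone (the interior endpoint moves freely, just like the confetti already described), the undotted parts of $D$ and $D_0$ coincide, and the only discrepancy lies in how the dotted endpoints are paired into non-crossing arcs. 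The saddle relation supplies a local move exchanging two non-crossing pairings of four endpoints at the cost of a sign, and I would use it to transform the matching of $D$ into that of $D_0$ step by step, accumulating signs to conclude $D = \pm D_0$.

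The main obstacle is verifying that any two non-crossing matchings of a fixed cyclic endpoint set can be connected by such local saddle flips, each realizable inside a sub-disk of the ambient disk that is disjoint from the remaining strands of $D$. This is essentially the classical connectivity of the flip graph on triangulations of a polygon, but one must be careful that the particular flips available are those whose four endpoints bound an innermost sub-disk meeting no other strand. I would handle this by induction, peeling off an outermost arc shared by $D$ and $D_0$ and shrinking to the complementary sub-disk; if no such outermost agreement is immediately available, one first performs a single saddle flip to manufacture one, and this always succeeds because an outermost arc of either matching automatically bounds a sub-disk meeting only that arc, so its companion saddle flip can always be executed without interference from the rest of $D$.
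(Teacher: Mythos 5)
Your proposal is correct and follows the same route as the paper: linear independence comes from the orthogonality established in the preceding lemma, and spanning comes from the fact that dotted basis diagrams span $\mathcal{P}'_n$ together with the claim that equivalent dotted basis diagrams are equal up to sign via repeated saddle flips. The paper simply asserts the latter as ``repeated application of the third defining relation,'' whereas you supply the flip-connectivity induction that justifies it; this extra detail is sound but does not change the approach.
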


\begin{proof}
If $D$ and $D'$ are equivalent dotted basis diagrams in $\mathcal{P}'_n$
then,
by repeated application of the third defining relation of $\mathcal{P}'$,
we have $D = \pm D'$.
It follows that $\mathcal{B}$ spans $\mathcal{P}'_n$.
By the previous lemma,
$\mathcal{B}$ is orthogonal,
and hence linearly independent.
\end{proof}

We could use some convention
to precisely specify a basis for $\mathcal{P}'_n$,
although perhaps it is more elegant not to do so.
The dimension of $\mathcal{P}'_n$ is $2^{n-1}$,
the number of even subsets of the set of $n$ endpoints on the boundary.

%-----------------------------
\section{The main results}
\label{sec:proof}

Recall the definitions of a right- and left-handed crossing
from Section \ref{sec:defn}.
They are equivalent to the following expressions using dotted strands.
$$
\begin{To} \poscross \end{To}
= q \, \begin{To} \Tcup{0} \Tcup{180} \Tdot{180} \Tdot{0} \end{To}
+ (q-q^{-1}) \begin{To} \Tcup{180} \Ttick{45} \Ttick{315} \Tdot{180} \end{To}
+ q \, \begin{To} \Tstrand{135} \Ttick{45} \Ttick{225} \Tdoto \end{To}
+ q^{-1} \begin{To} \Tstrand{45} \Ttick{135} \Ttick{315} \Tdoto \end{To}
- q^{-1} \begin{To} \Ttick{45} \Ttick{135} \Ttick{225} \Ttick{315} \end{To}.
$$
$$
\begin{To} \negcross \end{To}
= q^{-1} \begin{To} \Tcup{0} \Tcup{180} \Tdot{180} \Tdot{0} \end{To}
+ (q^{-1}-q) \begin{To} \Tcup{0} \Ttick{135} \Ttick{225} \Tdot{0} \end{To}
+ q^{-1} \begin{To} \Tstrand{45} \Ttick{135} \Ttick{315} \Tdoto \end{To}
+ q \, \begin{To} \Tstrand{135} \Ttick{45} \Ttick{225} \Tdoto \end{To}
- q \, \begin{To} \Ttick{45} \Ttick{135} \Ttick{225} \Ttick{315} \end{To}.
$$

\begin{lem}
$\mathcal{P}$, and hence $\mathcal{P}'$,
satisfies the Alexander-Conway skein relation
$$\begin{To} \poscross \end{To} - \begin{To} \negcross \end{To}
= (q-q^{-1}) \begin{To} \Tcup{0}\Tcup{180}\downarr{180}\uparr{0} \end{To},$$
and the following variations on Reidemeister I.
$$\begin{To} \Rone{1}{1} \end{To}
= \begin{To} \Rone{1}{-1} \end{To}
= -q^{-1} \begin{To} \downarr{180} \Tcup{180} \end{To},$$
$$\begin{To} \Rone{-1}{1} \end{To}
= \begin{To} \Rone{-1}{-1} \end{To}
= -q \, \begin{To} \uparr{0} \Tcup{0} \end{To}.$$
\end{lem}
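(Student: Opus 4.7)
My plan is to verify both parts by direct computation using the dotted-strand expansions of the crossings displayed at the start of Section~\ref{sec:proof}, together with the defining relations of $\mathcal{P}$ and Lemma~\ref{lem:dot}. Since $\mathcal{P}'$ is a quotient of $\mathcal{P}$, it suffices to work in $\mathcal{P}$.

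For the skein relation, I would subtract the two five-term expansions term by term. Two of the terms, namely the through-strand pieces involving $\Tstrand{135}\Ttick{45}\Ttick{225}\Tdoto$ and $\Tstrand{45}\Ttick{135}\Ttick{315}\Tdoto$, appear in both expansions with identical coefficients $q$ and $q^{-1}$ and cancel. The remaining three pairs all contribute $(q-q^{-1})$ times a diagram, giving the sum of a doubly-dotted double cup, the two single-cup-with-dot-and-ticks configurations, and the four-tick diagram. Expanding every dot via $\begin{To}\Tstrand{90}\Tdoto\end{To}=\begin{To}\Tstrand{90}\end{To}-\begin{To}\Ttick{90}\Ttick{270}\end{To}$, the single-cup contributions cancel the mixed cross terms of the doubly-dotted double cup and the four-tick contributions telescope to zero, leaving just $(q-q^{-1})\Tcup{0}\Tcup{180}$. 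Since $\mathcal{P}$ is unoriented, this equals the right-hand side of the Alexander--Conway relation.

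For each Reidemeister~I variation I would substitute the appropriate crossing expansion into the curl diagram and close the two relevant endpoints using the curl arc, then simplify term by term. The cup-cup-with-dots term should close one cup into a dotted loop equal to $-1$ by Lemma~\ref{lem:dot}, leaving a surviving dotted cup; the cup-with-dot-and-ticks term should close its two ticks into an interior $\begin{To}\Ti\end{To}$-configuration, equal to $\varnothing$ by the defining relation, again leaving a dotted cup; each through-strand term should produce a dotted strand from the boundary to an interior univalent vertex, which vanishes because local expansion of its dot gives an extended tick minus the same extended tick times an $\begin{To}\Ti\end{To}=\varnothing$ factor; and the four-tick term should close two of its ticks into $\varnothing$, leaving a pair of isolated ticks. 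Adding up, the dotted-cup coefficients should collapse to $-q^{-1}$ or $-q$ and the residual tick-tick contributions should cancel the four-tick remainder, giving $-q^{-1}\Tcup{180}$ or $-q\,\Tcup{0}$. The equality of the two diagrams differing only in the second parameter then follows because that parameter merely toggles the sign of the crossing inside the curl, and the calculation just outlined gives the same final value for either sign.

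The main obstacle is the case-by-case bookkeeping in the Reidemeister~I computation. The key nontrivial observation is that a dotted strand with one boundary endpoint and one interior univalent endpoint vanishes in $\mathcal{P}$; without this the through-strand terms would survive and the four variants would not collapse into a single clean monomial multiple of a cup. A secondary source of trouble is the cancellation between the leftover tick-tick pieces from the cup-with-dot terms and the tick-tick piece arising from the four-tick term, so I would keep a careful tally of signs and powers of $q$ there.
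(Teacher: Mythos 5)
Your proposal is correct and follows exactly the route the paper intends: the paper's proof is simply ``these follow easily from the definitions of crossings and Lemma \ref{lem:dot},'' and your computation is a faithful (and verifiably correct) expansion of that, including the key observation that a dotted strand with an interior univalent endpoint vanishes. The only simplification you miss is that the skein relation is immediate from the original seven-term definitions in Section \ref{sec:defn}, whose $q(\cdots)+q^{-1}(\cdots)$ parts coincide, so no dot expansion is needed there.
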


\begin{proof}
These follow easily from the definitions of crossings
and Lemma \ref{lem:dot}.
\end{proof}

\begin{lem}
$\mathcal{P}'$ satisfies the the following version of Reidemeister II.
$$\begin{TO} \Rtwo \end{TO} =
 \begin{TO} \downarr{0} \downarr{180} \Tcup{0} \Tcup{180} \end{TO}.$$
\end{lem}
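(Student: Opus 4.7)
My plan is to substitute the dotted-strand expansions of $\begin{To}\poscross\end{To}$ and $\begin{To}\negcross\end{To}$ given at the start of this section into the two crossings in $\begin{TO}\Rtwo\end{TO}$. This yields a formal sum of $5 \times 5 = 25$ composite diagrams in $\mathcal{P}'_4$, each carrying a scalar coefficient that is a product of two factors, each of which is $\pm q^{\pm 1}$ or $\pm(q-q^{-1})$. Meanwhile, applying $\begin{To}\Tstrand{90}\end{To} = \begin{To}\Tstrand{90}\Tdoto\end{To} + \begin{To}\Ttick{90}\Ttick{270}\end{To}$ to each of the two parallel strands on the right-hand side expresses it as a sum of four dotted basis diagrams in $\mathcal{P}'_4$, providing the target coefficients I need to match.

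First I would prune the trivially vanishing compositions. Any closed loop formed by the stacking is zero by the first defining relation. Any internal $I$-strand produced when a ``tick'' from one crossing meets a ``tick'' from the other is absorbed by the second defining relation, and any configuration in which a tick from one crossing joins an interior tick-end of the other produces a univalent dotted vertex that vanishes by Lemma \ref{lem:dot}. After this pruning, the surviving compositions can be rewritten as dotted basis diagrams, and their coefficients collected class by class in $\mathcal{P}'_4$, a space of dimension $2^3 = 8$.

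The hard part will be the coefficient matching, together with identifying where the saddle relation must be invoked. Since the analogous identity is known to fail in $\mathcal{P}$ -- this is precisely the reason Section~\ref{sec:pa2} introduces $\mathcal{P}'$ -- the argument must make essential use of the saddle relation, presumably to merge a pair of surviving compositions whose dotted strands form cups arranged $90^\circ$ apart. Verifying that after this merging the net coefficient of each equivalence class agrees with the corresponding coefficient from the right-hand side is the bulk of the bookkeeping, but is in principle mechanical once the merging step has been carried out.
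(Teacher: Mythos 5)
Your proposal follows essentially the same route as the paper: expand both crossings into their five-term dotted-strand forms (25 products), kill most terms via Lemma \ref{lem:dot}, write the right-hand side as four dotted basis diagrams, and observe that the leftover discrepancy is exactly the saddle element $\begin{To} \Tcup{90} \Tdot{90} \Tcup{270} \Tdot{270} \end{To} + \begin{To} \Tcup{0} \Tdot{0} \Tcup{180} \Tdot{180} \end{To}$, which vanishes in $\mathcal{P}'$. Your guess about where the saddle relation enters is precisely what the paper's computation yields, so the plan is correct and matches the paper's proof.
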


\begin{proof}
First expand out each crossing in the diagram
on the left side of the equation
into a linear combination of five diagrams with dotted strands.
Most of the resulting $25$ diagrams can be eliminated by Lemma \ref{lem:dot}.
Now express the diagram on the right hand side of the equation
as a sum of four of the dotted basis diagrams of $\mathcal{P}_4$.
Combining these calculations gives
$$\begin{To} \Rtwo \end{To} -
 \begin{To} \downarr{0} \downarr{180} \Tcup{0} \Tcup{180} \end{To}
 = \begin{To} \Tcup{90} \Tdot{90} \Tcup{270} \Tdot{270} \end{To}
    + \begin{To} \Tcup{0} \Tdot{0} \Tcup{180} \Tdot{180} \end{To}.
$$
Thus the desired relation is equivalent to
the saddle relation in the definition of $\mathcal{P}'$.
\end{proof}

\begin{lem}
$\mathcal{P}$, and hence $\mathcal{P}'$,
satisfies the the following version of Reidemeister III.
$$\begin{TO} \Rthree{1} \end{TO} = \begin{TO} \Rthree{-1} \end{TO}.$$
\end{lem}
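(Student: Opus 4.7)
The plan is a direct expansion: expand each of the three crossings on each side of the claimed identity using the five-term formula given at the start of Section~\ref{sec:proof}, and show that after reduction both sides yield the same element of $\mathcal{P}_6$. With three crossings on each side, each side is a priori a sum of $5^3 = 125$ diagrams in $\mathcal{P}_6$, each carrying a coefficient of the form $\pm q^{k}$.

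First I would carry out the expansion and immediately discard the terms that vanish for local reasons: by Lemma~\ref{lem:dot}, any term in which a dot ends up at a univalent vertex is zero, and by the defining relations of $\mathcal{P}$ any diagram containing a closed loop is zero. Once those cancellations are performed, and after deleting strands with both endpoints in the interior, each surviving term is a dotted basis diagram for $\mathcal{P}_6$ with an explicit sign and power of $q$. The signs come from two sources: the explicit signs in the crossing expansion, and the factor of $-1$ picked up each time a closed dotted loop collapses via the third part of Lemma~\ref{lem:dot}.

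Since the dotted basis diagrams of $\mathcal{P}_n$ span $\mathcal{P}_n$ and are linearly independent (as shown in Section~\ref{sec:pa}), verifying the equality reduces to comparing coefficients of each dotted basis diagram on the two sides. I would sort the surviving terms by boundary pattern, namely which of the six boundary endpoints are paired via dotted strands and which carry ticks, since the local crossing expansion only modifies the boundary combinatorics in a tightly constrained way. Once everything is sorted in this fashion, the verification becomes a finite mechanical check: for each boundary pattern, add up the contributing triples of resolutions on both sides and verify they agree.

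The main obstacle is simply the size of the bookkeeping, with up to $125$ terms per side and nontrivial sign contributions from loop collapses. This is exactly the kind of tedious but algorithmic calculation for which the introduction credits Bar-Natan's \textsc{Mathematica} verification. I would note in particular that, in contrast to the proof of Reidemeister~II, the saddle relation is not used anywhere in this verification, which is precisely why the identity holds already in $\mathcal{P}$ rather than only in the quotient $\mathcal{P}'$.
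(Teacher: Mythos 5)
Your proposal is correct and matches the paper's approach: the paper likewise expands both sides into (up to) $125$ terms, uses Lemma \ref{lem:dot} to kill terms as they arise, and checks the resulting identity in $\mathcal{P}$ by a finite hand computation, explicitly noting that only Lemma \ref{lem:dot} (and not the saddle relation) is needed. Your added observation that the surviving terms can be compared coefficient-by-coefficient against the linearly independent dotted basis diagrams is a sensible way to organize that same check.
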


\begin{proof}
Each side of the equation can be expanded out to
a linear combination of $125$ terms.
However it is more efficient to expand one crossing at a time,
using Lemma \ref{lem:dot} to eliminate many terms as they arise.
An unpleasant number of diagrams remain,
but the resulting identity can be checked by hand
with some care and patience,
using only Lemma \ref{lem:dot}.
\end{proof}

Since we are working with oriented tangles,
there are other versions of Reidemeister II and III to check.

\begin{lem}
$\mathcal{P}'$ satisfies all versions of Reidemeister II and III.
\end{lem}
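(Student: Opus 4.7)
My plan is to deduce the remaining oriented versions of Reidemeister II and III from the cases already verified, using the expansion formulas for crossings together with a sign observation.

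I would begin by enumerating the cases. For Reidemeister II the version proved above has both strands oriented downward through the two crossings; the remaining oriented versions are both-upward and the two anti-parallel arrangements (which are mirror images of each other). For Reidemeister III, one must check the remaining assignments of orientations to the three strands.

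The key observation is that $\mathcal{P}'$ is unoriented, so the image in $\mathcal{P}'$ of an oriented tangle diagram depends on orientations only through the sign (positive or negative) assigned to each crossing, which in turn controls which of the two expansion formulas is used. For any RII configuration, regardless of orientation, one of the two crossings is positive and the other is negative: reversing one strand of a crossing flips its sign, and the two crossings of an RII always have opposite signs. Consequently, the expansion of the left-hand side in each new orientation mixes the positive- and negative-crossing formulas in exactly the same way as in the case already proved, up to a relabelling of which crossing contributes which formula and up to a rotation of the underlying picture. For each new orientation I would repeat the calculation of the previous RII lemma and check that the discrepancy between the two sides is again a rotate of the saddle relation, hence zero in $\mathcal{P}'$.

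For Reidemeister III, once all oriented RIIs are in hand, I would invoke the classical fact that every oriented version of RIII can be obtained from a single fixed version by sandwiching the known RIII between RII moves that temporarily swap pairs of crossings (and possibly reverse an arc using the variants of Reidemeister I proved above). This reduces each outstanding orientation class to the version already verified. As a fallback, one can expand each orientation class of RIII directly, as in the proof of the previous RIII lemma, using only Lemma \ref{lem:dot} together with the crossing formulas.

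The step I expect to be the main obstacle is the RIII reduction. Although the recipe for generating all oriented RIIIs from a single one plus RII is classical, executing it in the present skein-theoretic setting requires careful bookkeeping of crossing signs as strands are shuffled, and one must verify that no stray RI move sneaks in during the reduction. The brute-force alternative is merely tedious, as each case proceeds by the same method as the previous RIII lemma.
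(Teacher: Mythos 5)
Your proposal reaches the right conclusion and its RIII step coincides with the paper's (one oriented RIII plus all oriented RIIs yields the rest), but your treatment of the remaining Reidemeister II versions takes a genuinely different route. The paper does no further expansion at all: it obtains a new oriented RII by connecting two endpoints of the already-proven RIII and cancelling the resulting kink with the RI identities (the factors $-q^{\pm1}$ appear on both sides), and it obtains the version with both crossing signs reversed by applying the Alexander--Conway skein relation to each crossing of a known RII. Your route instead re-expands each remaining oriented RII directly and checks that the discrepancy is again the saddle element. That is viable, but be careful with your claim that the image in $\mathcal{P}'$ depends on orientations ``only through the sign'' of each crossing: the expansion formulas are stated for crossings with both strands directed upward, so an anti-parallel crossing must first be rotated to standard position and its expansion is a quarter-turn of the stated formula --- genuinely different diagrams appear, and the anti-parallel RII computation is not a relabelling of the parallel one. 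Your plan survives this because you commit to redoing the computation in each case (and the saddle element is rotation-invariant, so ``a rotate of the saddle relation'' is still the saddle relation), but the bookkeeping is heavier than you suggest. The trade-off is clear: your approach is self-contained and mechanical, while the paper's capping-off and skein-reversal tricks avoid all further diagram expansion at the cost of a less obvious derivation. One small point: Reidemeister I does not ``reverse an arc''; in the paper its role is only to absorb the kink created by capping off RIII.
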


\begin{proof}
These can all be deduced from the relations we have already proved.
For example, consider what happens if
we connect the left two endpoints of the above Reidemeister III relation.
Using Reidemeister I and the above Reidemeister II,
we then obtain a new version of Reidemeister II.
We can also use the skein relation
to effectively reverse both of the crossings
in a Reidemeister II relation.
Finally,
in the presence of all versions of Reidemeister II,
all versions of Reidemeister III become equivalent.
\end{proof}

To make Reidemeister I hold precisely,
we use a correction factor based on the turning number.
The usual definition of the turning number of a curve
is the winding number of the tangent vector,
but the following pictorial definition is more in keeping
with the spirit of this paper.

Suppose $T$ is an oriented tangle with two endpoints.
Smooth every crossing of $T$ in the usual way:
$$\begin{To} \poscross \end{To} \mapsto
\begin{To} \Tcup{0}\Tcup{180}\downarr{180}\uparr{0} \end{To},
\quad \mbox{and} \quad
\begin{To} \negcross \end{To} \mapsto
\begin{To} \Tcup{0}\Tcup{180}\downarr{180}\uparr{0} \end{To}.$$
The resulting diagram consists of
a strand with both endpoints on the boundary
and some oriented closed loops.
Let the {\em turning number} $\tau(T)$ be
the number of positively oriented loops
minus the number of negatively oriented loops.

\begin{thm}
If $T$ is an oriented tangle with two endpoints on the boundary of the disk
then
$$(-q)^{-\tau(T)} \Delta(T)$$
is the Alexander polynomial of $\hat{T}$.
\end{thm}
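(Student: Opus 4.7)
My plan is to verify that $G(T) := (-q)^{-\tau(T)}\Delta(T)$ satisfies the defining axioms of the Alexander polynomial of $\hat{T}$, and then invoke uniqueness of the Alexander polynomial to conclude. First I will check invariance of $G$ under all oriented Reidemeister moves, so that $G$ is a genuine tangle invariant. Invariance under R2 and R3 is inherited from the invariance of the image of $T$ inside $\mathcal{P}'_2$ established in the preceding lemmas, because $\Delta(T)$ is simply the coefficient of the undotted strand in a fixed basis of $\mathcal{P}'_2$. For R1, the R1 lemma says that each kink multiplies the image of $T$ by $-q^{\pm 1}$, while the same kink changes $\tau(T)$ by exactly $\pm 1$, so these two effects cancel in $G(T)$.

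Next I will verify the Alexander-Conway skein identity $G(T_+) - G(T_-) = (q-q^{-1})\,G(T_0)$. This follows from the corresponding skein relation in $\mathcal{P}'$ together with the observation that the three tangles $T_+,T_-,T_0$ all smooth to the same planar diagram and therefore share the same turning number; the common factor $(-q)^{-\tau}$ can then be pulled outside the skein identity.

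For the base case, observe that the trivial $2$-endpoint tangle (a single oriented arc with no crossings and no loops) has expansion equal to the undotted strand with coefficient $1$ and $\tau=0$, so $G = 1$, matching the value of the Alexander polynomial on the unknot. Any tangle diagram containing a closed loop gives $\Delta(T)=0$ directly from the definition of $\Delta$, matching the vanishing of the Alexander polynomial on split links containing an unknotted component.

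I will then conclude by Conway's uniqueness argument. Both $T \mapsto G(T)$ and $T \mapsto (\text{Alexander polynomial of }\hat{T})$ are invariants of oriented tangle diagrams satisfying the same Alexander-Conway skein relation and agreeing on all $0$-crossing tangles, and Conway's induction on unknotting complexity (number of crossings together with an unknotting sequence) shows that such an invariant is uniquely determined. The main obstacle is this final induction: at each step one must select a crossing whose sign change strictly reduces the unknotting complexity, reducing eventually to the base case. This is the same book-keeping used in Conway's original construction of the Alexander polynomial, carried over here to tangles.
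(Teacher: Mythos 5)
Your proposal is correct and takes essentially the same route as the paper: both arguments verify that $(-q)^{-\tau(T)}\Delta(T)$ is invariant under all Reidemeister moves (with the R1 defect of $\Delta$ cancelled against the $\pm 1$ change in $\tau$), satisfies the Alexander--Conway skein relation (since all three terms have equal turning number), and equals $1$ on the trivial strand, then conclude by the uniqueness of the Alexander polynomial. The only cosmetic difference is that the paper runs the Conway-style reduction up front, writing the image of $T$ in $\mathcal{P}'_2\cong\mathcal{P}_2$ as $\lambda$ times the undotted strand so that $\Delta(T)=\lambda$, whereas you defer that reduction to the final uniqueness induction.
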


\begin{proof}
Consider $T$ as an element of $\mathcal{P}'_2$.
By the skein relation and Reidemeister moves,
$$T = \lambda \, \begin{To} \Tstrand{90} \end{To},$$
for some scalar $\lambda$.
By our complete description of
$\mathcal{P}'_2$ and $\mathcal{P}_2$,
we know that they are isomorphic,
so the above equation holds in $\mathcal{P}_2$ as well.
By definition,
$\Delta(T) = \lambda$.

Both $\Delta(T)$ and $\tau(T)$ are invariant under Reidemeister II and III.
However they both change under the different versions of Reidemeister I.
The correction term was chosen precisely to ensure that
$$(-q)^{-\tau(T)} \Delta(T)$$
is invariant under all Reidemeister moves.

Note that $\Delta(T)$ satisfies the Alexander-Conway skein relation,
and all terms in this relation have the same turning number.
Thus $(-q)^{-\tau(T)} \Delta(T)$ also satisfies this skein relation.
Finally,
if $T$ is a single straight strand,
so that $\hat{T}$ is the unknot,
then $\Delta(T) = 1$ and then $\tau(T) = 0$.

We conclude that $(-q)^{-\tau(T)} \Delta(T)$ 
satisfies the definition of the Alexander polynomial of $\hat{T}$
as given in the introduction.
\end{proof}

This completes our construction of the Alexander polynomial.
As promised,
it generalizes to arbitrary oriented tangles.

\begin{defn}
If $T$ is an oriented tangle with $2n$ endpoints on the boundary of the disk
then let $\Delta(T)$ be the image of $T$ in $\mathcal{P}'_{2n}$,
using the definitions of crossings given in Section \ref{sec:defn}.
\end{defn}

By the discussion in Section \ref{sec:pa2},
it seems reasonable to say the vector space $\mathcal{P}'_{2n}$
is completely understood.
In particular, it has a basis,
which is canonical up to the sign of each basis vector,
and there is an elementary algorithm
to express any vector in terms of the basis vectors.

Thus we have a vector valued invariant $\Delta$ of oriented tangles
that satisfies the Alexander-Conway skein relation,
is invariant under Reidemeister II and III,
and is ``almost'' invariant under Reidemeister I.

It is possible to renormalize $\Delta(T)$
to fix the problem with Reidemeister I,
but this requires an arbitrary choice of convention
to specify the turning number of a tangle.
I prefer to avoid choosing conventions,
and leave $\Delta(T)$ as an invariant of
oriented tangle diagrams up to regular isotopy,
lying in a vector space that has a canonical basis vectors up to sign.

%------------------------------

\end{document}